\theoremstyle{definition}
\newtheorem{Def}{Definition}
\newtheorem{Cor}{Corollary}
\newtheorem{Prop}{Proposition}
\newtheorem{Thm}{Theorem}
\begin{document}
\title[Simple fold maps and manifolds bounded by the source manifolds]{Simple fold maps and compact manifolds bounded by their source manifolds}
\author{Naoki Kitazawa}
\keywords{Singularities of differentiable maps; singular sets, fold maps. Differential topology.}
\subjclass[2010]{Primary~57R45. Secondary~57N15.}
\address{Department of Mathematics, Tokyo Institute of Technology, 
2-12-1 Ookayama, Meguro-ku, Tokyo 152-8551, JAPAN, Tel +81-(0)3-5734-2205, 
Fax +81-(0)3-5734-2738,
}
\email{kitazawa.n.aa@m.titech.ac.jp}
\maketitle
\begin{abstract}
Fold maps are higher dimensional versions of Morse functions, which play important roles in the studies of smooth manifolds, and such
 general maps also have been fundamental tools in the studies of
 smooth manifolds by using generic maps. In this paper, we study {\it simple} fold maps, which are fold maps such that any connected component of the inverse image
 of each singular value includes at most
 one singular point. More precisely, we consider
 simple fold maps having simple structures locally or globally and show that the source manifolds are bounded
 by (PL) manifolds obtained by considering the structures of maps under appropriate conditions. Such studies are regarded
 as extensions of results obtained by Saeki, Suzuoka etc. by 2005, which state that closed manifolds admitting simple fold maps and more
 generally stable maps into manifolds of lower dimensions without boundaries inverse images of whose regular
 values are always disjoint unions of spheres are bounded by compact manifolds obtained by observing the given maps.

\end{abstract}

\section{Introduction, terminologies and notation}
\label{sec:1}
As a higher dimensional versions of Morse functions, {\it fold maps} have been fundamental tools in the studies of smooth manifolds by using generic maps. 
 A {\it fold map} is defined as a smooth map
 such that each singular point is of the form
$$(x_1, \cdots, x_m) \mapsto (x_1,\cdots,x_{n-1},\sum_{k=n}^{m-i}{x_k}^2-\sum_{k=m-i+1}^{m}{x_k}^2)$$ for two positive integers $m \geq n$ and an integer $0 \leq i \leq m-n+1$. A Morse function
 is naturally regarded as a fold map ($n=1$).
  For a fold map from a closed smooth manifold of dimension $m$ into a smooth manifold of dimension $n$
 (without boundary), the following two hold ($m \geq n \geq 1$).
\begin{enumerate}
\item The {\it singular set}, defined as the set of all the singular points, is a closed smooth submanifold of dimension $n-1$ of the source manifold. 
\item The restriction map to the singular set is a smooth immersion of codimension $1$.
\end{enumerate}

Studies of such maps were started by Whitney (\cite{whitney}) and Thom (\cite{thom}) in the 1950s.
We also note that if the restriction map to the singular set is an immersion with normal
 crossings, it is {\it stable} (stable maps are important in the theory of global singularity; see \cite{golubitskyguillemin} for example). \\ 

\ \ \ Since around the 1990s, fold maps with additional conditions have been
 actively studied. For example, in \cite{burletderham}, \cite{furuyaporto}, \cite{saeki2}, \cite{saekisakuma} and \cite{sakuma}, {\it special
 generic} maps, which are defined as smooth maps such that singular points are always of the form
$$(x_1, \cdots, x_m) \mapsto (x_1,\cdots,x_{n-1},\sum_{k=n}^{m}{x_k}^2)$$ for two positive integers $m \geq n$, were studied. {\it Simple} fold maps
 are defined as fold maps such that fibers of singular values do not have any connected component with more
 than one singular points (see cite \cite{saeki} and \cite{sakuma2}) and special generic maps are simple. In
 \cite{kobayashisaeki}, Kobayashi and Saeki
 investigated topology of {\it stable} maps including fold maps which are stable from closed manifolds of dimensions larger than $2$ into the plane. 
maps including fold maps which are stable from closed manifolds of dimensions larger than $2$ into the plane. 

\ \ \ Later, in \cite{saekisuzuoka}, Saeki and Suzuoka
 found good properties of manifolds admitting stable maps such that the inverse images of regular values are disjoint unions of spheres. The main
 theorem \cite[Theorem 4.1]{saekisuzuoka} states that a closed smooth manifold of
 dimension $4$ admitting such a stable map into a surface without boundary bounds a nice compact smooth manifold of dimension $5$ we can construct
 by observing the inverse images of the maps.
 As \cite[Lemma 1]{kitazawa}, the theorem is generalized as a proposition for a simple fold map such that the inverse images of regular values
 are disjoint unions of spheres from a closed smooth manifold of
 dimension $m$ into a smooth manifold of
 dimension $n$ without boundary (see also \cite[Proposition 3.12]{saeki} and its proof). In this paper, we consider extensions of these works. We consider
 simple fold maps such that the inverse images of regular values are more general and show that under appropriate differential
 topological conditions, the source manifolds are bounded by compact PL or smooth manifolds. 

\ \ \ This paper is organized as follows.

\ \ \ Section \ref{sec:2} is for preliminaries. We review the {\it Reeb space} of a smooth
 map, which is the space consisting of all the connected components of
 all the fibers of the map. We review fundamental facts on simple fold maps in Proposition \ref{prop:1}.

\ \ \ In section \ref{sec:3}, as main theorems, under some appropriate situations, we prove facts similar to ones in \cite{saekisuzuoka} introduced before for closed smooth manifolds admitting simple fold maps such that the inverse images of regular values
 are not always disjoint unions of spheres by analogy of the proofs of the original statements with extra
 new technique on algebraic and differential topology. More precisely, we
 first decompose
 the Reeb space $W_f$ of the given simple fold map $f:M \rightarrow N$ into
some pieces by using its simplicial structure, construct manifolds piece by piece and then we glue them together
 to obtain the desired manifold $W$ such that $\partial W=M$. As a result, we also obtain other objects such as a polyhedron simple homotopy equivalent to $W$, which is
 not obtained in the proofs of known results.

\ \ \ In this paper, smooth manifolds and smooth maps between them are of class $C^{\infty}$ unless otherwise stated. Throughout this paper, we assume that $M$ is a closed smooth manifold of dimension $m$ ($m \geq 1$)
 and that $N$ is a smooth manifold of dimension $n$ without boundary ($m \geq n \geq 1$). We set $f$ as a smooth map from $M$ into $N$ and
 denote the {\it singular set} of the map, defined as the set
 of all the singular points of the map, by $S(f)$. \\
 
\ \ \ We also note about homeomorphisms. In this paper, we often consider ${\rm PL}$ homeomorphisms between two
 polyhedra including ones between two ${\rm PL}$ manifolds. In this paper, for two polyhedra $X_1$ and $X_2$, a
 homeomorphism $\phi:X_1 \rightarrow X_2$ which is isotopic (in the topology category) to a homeomorphism from $X_1$ onto $X_2$ which gives an
 isomorphism between underlying simplicial complexes is said to be a {\it ${\rm PL}$ homeomorphism}. Note that any
 diffeomorphism between two diffeomorphic manifolds is
 regarded as a ${\rm PL}$ homeomorphism, where we consider the canonical ${\rm PL}$ structures.\\

\section{Preliminaries}
\label{sec:2}

\subsection{Reeb spaces}

We introduce the {\it Reeb space} of a continuous map.

\begin{Def}
\label{def:3}
 Let $X$, $Y$ be topological spaces. For $p_1, p_2 \in X$ and for a map $c:X \rightarrow Y$, 
 we define as $p_1 {\sim}_c p_2$ if and only if $p_1$ and $p_2$ are in
 the same connected component of $c^{-1}(p)$ for some $p \in Y$. ${\sim}_{c}$ is an equivalence relation.

\ \ \ We set the quotient space $W_c:=X/{\sim}_c$. we call $W_c$ the {\it Reeb space} of $c$.
\end{Def}

 We denote the induced quotient map from $X$ into $W_c$ by $q_c$. We define $\bar{c}:W_c \rightarrow Y$
 so that $c=\bar{c} \circ q_c$. 
For example, for a
 Morse function, the Reeb space
 is a graph and for a simple fold map, the Reeb space is homeomorphic to a polyhedron which is not so complex (see Proposition \ref{prop:1} later). For a
 special generic map,
 the Reeb space is homeomorphic to a smooth manifold (see section 2 of \cite{saeki2}).

\ \ \ Here, we introduce terms on spheres, fiber bundles and polyhedra which are important in this paper.

\ \ \ In this paper, an {\it almost-sphere} means a smooth homotopy sphere given by glueing same dimensional two standard discs together by a diffeomorphism between the boundaries. We note that the underlying ${\rm PL}$ manifold of an almost-sphere is a standard sphere.

\ \ \ We often use terminologies on (fiber) bundles in this paper (see also \cite{steenrod} for example). For a topological space $X$, an {\it $X$-bundle} is a bundle whose fiber is $X$.  A bundle
 whose structure group is $G$
 is a {\it trivia bundle} if it is equivalent to the product bundle as a bundle whose structure group is $G$. In this paper, a {\it PL {\rm (}smooth{\rm )} bundle} means a bundle
 whose fiber is a polyhedron (resp. smooth manifold) and whose structure group is a group consisting of some PL homeomorphisms (resp. diffeomorphisms) of the fiber. A {\it linear} bundle is
 a bundle whose fiber is a standard sphere or a standard disc and whose structure group consist of linear transformations on the fiber.

\ \ \ In this paper, we also use terminologies on polyhedra. In this paper, for a polyhedron of dimension $k \geq 1$, 
 a {\it branched} point means a point such that every open
 neighborhood of the point is not homeomorphic to any open set of ${\mathbb{R}}^k$ or ${{\mathbb{R}}^k}_{+}:=\{(x_1,\cdots,x_k) \in {\mathbb{R}}^k \mid x_k \geq 0\}$. If a polyhedron $X$ of dimension
 $k$ does not have branched points, then it is a manifold with triangulation and we can define the {\it interior} ${\rm Int} X$ and the {\it boundary} $\partial X$.

\ \ \ The following proposition is well-known and we omit the proof. 

\begin{Prop}
\label{prop:1}
Let $f:M \rightarrow N$ be a special generic map, a simple fold map, or a stable fold map. Then, $W_f$ is regarded as a polyhedron and the following statements hold. 
\begin{enumerate}
\item
\label{prop:1.1}
 $W_f-q_f(S(f))$ is uniquely regarded as a smooth manifold of dimension $n$ such that $q_f:M-S(f) \rightarrow W_f-q_f(S(f))$ is a smooth submersion.
 For any compact subset $P$ of any connected component of $W_f-q_f(S(f))$, is regarded as the total space of a bundle given by $q_f {\mid}_{{q_f}^{-1}(P)}:{q_f}^{-1}(P) \rightarrow P$.
    
\item
\label{prop:1.2}
 Suppose that $f$ is simple. Then, for any connected component $C$ of $S(f)$, there exists a small
 regular neighborhood $N(q_f(C))$ of $q_f(C)$ in $W_f$ such that $N(q_f(C))$ is regarded as the total space of
 a PL bundle over $C$. 

 Furthermore, ${q_f}^{-1}(N(q_f(C)))$ is regarded as the total space
 of a PL bundle given by the composition of the map ${q_f} {\mid}_{{q_f}^{-1}(N(q_f(C)))}:{q_f}^{-1}(N(q_f(C))) \rightarrow N(q_f(C))$ and the projection of
 the bundle $N(q_f(C))$ over $C$.

\item 
\label{prop:1.3}

If $C$ is a connected component consisting of singular points of index $0$, then the bundle $N(q_f(C))$ before is
 a trivial $[0,1]$-bundle over $q_f(C)$ and $q_f(C)$ is the image of a section
 of the bundle over $N(q_f(C))$ corresponding to the point $0 \in [0,1]$. Furthermore, the bundle ${q_f}^{-1}(N(q_f(C)))$ is a
 linear $D^{m-n+1}$-bundle over $q_f(C)$. 

\item
\label{prop:1.4}

If in the situation of Proposition \ref{prop:1} (\ref{prop:1.2}), $C$ is a connected component of the singular set consisting of singular points of index larger than $0$, then the bundle $N(q_f(C))$ before is
 a PL $[-1,1]$-bundle over $q_f(C)$ and $q_f(C)$ is the image of a section
 of the bundle over $N(q_f(C))$ corresponding to $0 \in [-1,1]$. This bundle is trivial if and only if $N(q_f(C))-q_f(C)$ is not connected.

\item
\label{prop:1.5}

Ifin the situation of Proposition \ref{prop:1} (\ref{prop:1.2}), $C$ is a connected component of the singular set such that $q_f(C)$ consists of branched points, then the bundle $N(q_f(C))$ before is
 a PL bundle over $q_f(C)$ whose fiber is $K:=\{re^{i \theta} \mid 0 \leq r \leq 1, \theta=0, \frac{2}{3}\pi, \frac{4}{3}\pi \}$ and whose structure group
 consists of only the identity map and the conjugation $z \mapsto \bar{z}$. Furthermore, $q_f(C)$ is the image of a section
 of the bundle over $N(q_f(C))$ corresponding to $0 \in K$. 
 
\end{enumerate}  
\end{Prop}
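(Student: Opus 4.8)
The plan is to work entirely with the standard local normal forms for fold singularities, since every assertion is local along a connected component $C$ of the singular set. The statement is declared well-known in the paper, so the proof is really an organized bookkeeping of these local forms together with the bundle structure coming from moving along $C$. First I would set up the foundational claim \eqref{prop:1.1}: away from $S(f)$ the map $f$ is a submersion, so by the local submersion theorem $M-S(f)\to N$ is locally a projection and $q_f$ merely collapses connected components of fibers. Over a compact subset $P$ of a connected component of $W_f-q_f(S(f))$, I would use compactness of the fibers (they are closed subsets of the compact $M$) and Ehresmann's fibration theorem to conclude that ${q_f}^{-1}(P)\to P$ is a smooth fiber bundle; this also pins down the smooth structure on $W_f-q_f(S(f))$ uniquely as the one making $q_f$ a submersion.

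Next I would treat \eqref{prop:1.2} as the structural heart of the argument. For a \emph{simple} fold map, the defining condition is that each connected component of a singular-value fiber contains at most one singular point, so locally near a point of $C$ the map is modeled on the standard index-$i$ fold form crossed with $\mathbb{R}^{n-1}$ (the coordinates $x_1,\dots,x_{n-1}$ parametrizing the direction along $C$). I would build the regular neighborhood $N(q_f(C))$ by taking a tubular neighborhood of $C$ in $M$, applying $q_f$, and checking that the quotient of each normal slice is the Reeb space of a single Morse-type model; the fact that distinct sheets do not merge (simplicity) is exactly what guarantees the slice quotient is the elementary polyhedron described in \eqref{prop:1.3}--\eqref{prop:1.5}. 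Transporting this slice along $C$ gives the PL bundle structure on $N(q_f(C))$ over $C$, and the induced bundle structure on ${q_f}^{-1}(N(q_f(C)))$ follows by composing with the slice model; the structure group sits inside the PL homeomorphisms preserving the fiber model.

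For the three case analyses I would compute the Reeb quotient of each local normal form directly. For index $0$ the fiber slice is $(x_n,\dots,x_m)\mapsto \sum x_k^2$, whose Reeb quotient is a half-line, giving the trivial $[0,1]$-bundle with $q_f(C)$ as the $0$-section, and the preimage is the unit-disc bundle, i.e. a linear $D^{m-n+1}$-bundle, proving \eqref{prop:1.3}. For index $>0$ the slice map has the form $\sum x_k^2-\sum x_\ell^2$; its Reeb quotient is an interval $[-1,1]$ with $0$ the singular value, and whether the two sides of $q_f(C)$ are globally distinguishable along $C$ is controlled by the monodromy, giving triviality exactly when $N(q_f(C))-q_f(C)$ is disconnected, which proves \eqref{prop:1.4}. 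For the branched case the local Reeb quotient acquires the three-pronged fiber $K$ with the reflection monodromy, proving \eqref{prop:1.5}.

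The main obstacle I expect is the gluing/transport step in \eqref{prop:1.2}: verifying that the local slice models assemble into a genuine bundle over $C$ with PL structure group rather than merely a fiberwise-defined family, and that the monodromy is constrained to the stated finite groups in the branched and higher-index cases. Concretely the work lies in checking that the transition maps between overlapping slice neighborhoods are PL homeomorphisms respecting the fiber model, which uses the rigidity of the fold normal form and the simplicity hypothesis; once this is in place the case-by-case identifications \eqref{prop:1.3}--\eqref{prop:1.5} are direct readings of the elementary local quotients.
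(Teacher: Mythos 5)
The paper itself gives no argument here---it states Proposition \ref{prop:1} as well-known and omits the proof (the standard references are \cite{saeki} and the constructions in \cite{saekisuzuoka}, \cite{kobayashisaeki})---so your proposal has to be measured against that standard argument, whose skeleton (submersion plus Ehresmann for (\ref{prop:1.1}), transport of a transverse slice along $C$ with finite structure group for (\ref{prop:1.2})--(\ref{prop:1.5})) you do correctly identify. Part (\ref{prop:1.1}) is fine: properness is automatic since $M$ is compact, and Ehresmann applies. Part (\ref{prop:1.3}) is also essentially fine, because near a definite fold the fiber components are the small spheres $\sum x_k^2 = \epsilon$, which lie entirely inside a tubular neighborhood of $C$ in $M$; there the tube is $q_f$-saturated and your slice computation is the right one.

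The genuine gap is in the indefinite cases, where you conflate the Reeb quotient of the local normal form with the germ of $W_f$ along $q_f(C)$. First, ``taking a tubular neighborhood of $C$ in $M$ and applying $q_f$'' does not produce a neighborhood of $q_f(C)$ in $W_f$: for an indefinite fold the fiber components through points near $C$ are $(m-n)$-dimensional and leave any tube around $C$, so the tube is not saturated, $q_f$ of it is not open, and its own component-quotient is not a subspace of $W_f$. Second, and more seriously, the local model cannot decide between (\ref{prop:1.4}) and (\ref{prop:1.5}): for an index-$1$ fold the quotient of the normal slice $-x_1^2+x_2^2+\cdots$ is always a tripod (two local sheets below the critical value, one above), yet globally the two local sheets may belong to a \emph{single} component of the singular fiber, in which case $q_f(C)$ has no branched points and the fiber of $N(q_f(C))$ is $[-1,1]$, not $K$; for middle indices the local quotient is an interval regardless; and for $m-n+1=2$ the local quotient is even a four-pronged cross whose prongs get identified in pairs globally. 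Whether one lands in case (\ref{prop:1.4}) or (\ref{prop:1.5}) is data of the whole compact singular fiber component, invisible in the normal form. The repair is the standard one: work with the saturated neighborhood ${q_f}^{-1}(N(q_f(C)))$, whose fiber over $c \in C$ is a compact $(m-n+1)$-dimensional transverse manifold carrying a Morse function with \emph{exactly one} critical point---this is precisely where simplicity enters, not in preventing ``sheets from merging'' locally---and read off the germ of its Reeb graph at the critical vertex, which has $1$, $2$ or $3$ prongs according to whether the surgery on the fiber component creates a component, preserves the component count, or splits/merges components; the structure groups in (\ref{prop:1.4}) and (\ref{prop:1.5}) then follow because any transition map must preserve the two sides of the singular value, hence fixes the single-component prong and at most swaps the other two. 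Your transport-along-$C$ step is then carried out by an Ehresmann/isotopy-extension argument for the saturated bundle, as you anticipated, but applied to this larger fiber rather than to the normal slice of $C$.
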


\section{Simple fold maps and compact manifolds bounded by the source manifolds of the maps}
\label{sec:3}

As main works, in this section, we consider simple fold maps satisfying appropriate conditions on regular fibers and extra algebraic and differential topological
 conditions and construct manifolds bounded by their source manifolds. We review results on simple fold maps and stable maps whose regular fibers are disjoint
 unions of spheres and then, we prove Theorems \ref{thm:1}-\ref{thm:3}.

\subsection{Simple fold maps whose regular fibers are disjoint unions of spheres}
\label{subsec:3.1}

We review \cite[Theorem 4.1]{saekisuzuoka}.

\begin{Prop}[\cite{saekisuzuoka}]
\label{prop:2}
 Let $f:M \rightarrow N$ be a simple fold map from a closed manifold of dimension $4$ into a manifold of dimension $2$ without boundary. For
 each regular value $p$, let $f^{-1}(p)$ be a disjoint union of finite copies of $S^2$.

\ \ \ Then, there exist a compact smooth manifold $W$ of dimension $5$ such that $\partial W = M$ and
a continuous map $r: W \rightarrow W_f$ such that $r \mid_{\partial W}$ coincides with $q_f:M \rightarrow W_f$ and the following five hold.

\begin{enumerate}
\item For each $p \in W_f-q_f(S(f))$, $r^{-1}(p)$ is diffeomorphic to $D^3$.
\item $\bar{f} \circ r$ is a smooth submersion.
\item There exist a smooth triangulation of $W$ and a triangulation of $W_f$ such that $r$ is a simplicial map.
\item For each $p \in W_f$, $r^{-1}(p)$ collapses to a point and $r$ is a homotopy equivalence.
\item $W$ collapses to a subpolyhedron ${W_f}^{\prime}$ such that
 $r {\mid}_{{W_f}^{\prime}}:{W_f}^{\prime} \rightarrow W_f$ is a ${\rm PL}$ homeomorphism.
\end{enumerate}

If $M$ is orientable, then we can construct $W$ as an orientable manifold.
\end{Prop}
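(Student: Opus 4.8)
The plan is to build $W$ by replacing, fiberwise over the Reeb space $W_f$, each connected fiber of $f$ by a contractible filling --- a copy of $D^3$ over regular values --- and to carry this out piece by piece along the stratification of $W_f$ furnished by Proposition \ref{prop:1}, gluing the resulting local $5$-manifolds together. First I would treat the regular part $W_f - q_f(S(f))$, which by Proposition \ref{prop:1} (\ref{prop:1.1}) is a smooth surface over which $q_f$ restricts to an $S^2$-bundle. The key local input is Smale's theorem that $\mathrm{Diff}(S^2)$ deformation retracts onto $O(3)$: this lets me reduce the structure group of the bundle to $O(3)$, so the $S^2$-bundle is linear and canonically bounds the associated linear $D^3$-bundle. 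I take this $D^3$-bundle as the piece of $W$ over the regular part and define $r$ there to be the composition of the bundle projection with the fiberwise radial collapse of each $D^3$ onto its center. With this choice $r^{-1}(p) \cong D^3$ for regular $p$, and $\bar{f} \circ r$ is a submersion on this piece because it is a composition of submersions.

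Next I would fill in over the images of the singular components using their local models. Over a component $C$ of index $0$, Proposition \ref{prop:1} (\ref{prop:1.3}) already presents the preimage as a linear $D^3$-bundle over the $[0,1]$-bundle $N(q_f(C))$, so the filling there is forced to agree with the regular piece and the central section collapses onto $q_f(C)$. Over an indefinite component of type (\ref{prop:1.4}) or a branch component of type (\ref{prop:1.5}), I would use the PL-bundle structure to fill the nearby sphere fibers by disks and the singular fiber itself (a wedge of spheres, in the branch case) by the cone on it, which is contractible; this produces a local $5$-manifold on which $r$ again collapses every fiber to a point. The separate pieces are then glued along their common boundaries, which are $D^3$-bundles over the overlap regions of $W_f$; the gluings are compatible because in each case the structure group has been reduced to a linear or PL group and the $D^3$-fillings are determined up to the relevant isotopy. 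The reassembled boundary recovers $M$, giving $\partial W = M$ together with a global map $r : W \to W_f$ extending $q_f$.

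Finally I would verify the five assertions. Conclusion (1) holds by construction; (2) follows by checking on each stratum that $\bar{f} \circ r$ is a submersion. For (3) one chooses compatible smooth and polyhedral triangulations of the $D^3$-bundles so that $r$ becomes simplicial. Conclusion (4) is the heart of the homotopy-theoretic part: since each $r^{-1}(p)$ collapses to a point, $r$ has cell-like (contractible) fibers over the polyhedron $W_f$ and is therefore a homotopy equivalence. For (5) I would take ${W_f}^{\prime}$ to be the union of the central sections of the $D^3$-fillings; $W$ collapses onto ${W_f}^{\prime}$ by collapsing each disk fiber radially, and $r$ restricts to a PL homeomorphism ${W_f}^{\prime} \to W_f$. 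For orientability, if $M$ is oriented I can orient the $D^3$-fibers compatibly and reduce the linear structure groups to $SO(3)$, so that all gluings preserve orientation and $W$ is orientable.

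The main obstacle I expect is the coherent global gluing across the singular strata, and in particular the branch components of type (\ref{prop:1.5}): there the regular $S^2$-fibers on adjacent sheets must be filled by disks and the singular wedge fiber by a contractible piece in a way that matches the linear $D^3$-bundle fillings of the neighboring sheets along the whole overlap. Making these choices simultaneously consistent --- and then confirming that the resulting $r$ is genuinely simplicial and that $W$ collapses onto a faithful copy ${W_f}^{\prime}$ of $W_f$ --- is where the argument requires care; the one structural fact that makes the disk-filling canonical, and hence the gluing possible at all, is the homotopy equivalence $\mathrm{Diff}(S^2) \simeq O(3)$.
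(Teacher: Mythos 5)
Your proposal is correct and takes essentially the same route as the paper: Proposition \ref{prop:2} is quoted here from Saeki--Suzuoka without proof, but both its original proof and the paper's own generalizing constructions (Steps 1--4 in the proof of Theorem \ref{thm:3}, with the pieces $V_0$, $V_B$, $V_R$ glued along disk-bundle boundaries) proceed exactly as you do --- linearize the $S^2$-bundle over the regular part via Smale's theorem $\mathrm{Diff}(S^2)\simeq O(3)$, fill fiberwise by the associated $D^3$-bundle, cap the definite-fold and branch-point neighborhoods by $D^4$-bundle local models whose $r$-fibers over singular values collapse to points, and take ${W_f}^{\prime}$ to be the union of the resulting sections. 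Your ``cone on the wedge of spheres'' over a branch value is precisely the central $r$-fiber sitting inside the $D^4$-bundle piece ($V_B$ in the paper's notation), so the local $5$-manifold structure you were worried about is exactly the paper's local model.
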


As a corollary to the proposition, we can prove the following.

\begin{Cor}[\cite{saekisuzuoka}]
\label{cor:1}
In the situation of Proposition \ref{prop:2}, let $M$ be connected and $i:M \rightarrow W$ be
 the natural inclusion. Then,
$${q_f}_{\ast}=r_{\ast} \circ i_{\ast}:{\pi}_k(M) \rightarrow {\pi}_k(W_f)$$
 gives an isomorphism for $k=0,1$.  
\end{Cor}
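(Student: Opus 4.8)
The asserted equality ${q_f}_{\ast}=r_{\ast}\circ i_{\ast}$ is simply functoriality: since $i$ is the inclusion of $M=\partial W$ and $r\mid_{\partial W}=q_f$, we have $q_f=r\circ i$, whence ${q_f}_{\ast}=(r\circ i)_{\ast}=r_{\ast}\circ i_{\ast}$ on every homotopy group. The real content is that this map is bijective for $k=0,1$. By the fourth assertion of Proposition \ref{prop:2}, $r$ is a homotopy equivalence, so $r_{\ast}\colon\pi_k(W)\to\pi_k(W_f)$ is an isomorphism for all $k$; thus it suffices to prove that the inclusion induces an isomorphism $i_{\ast}\colon\pi_k(M)\to\pi_k(W)$ for $k=0,1$.

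To control $i_{\ast}$ I would exploit the collapse in the fifth assertion of Proposition \ref{prop:2}. Since $N$ has dimension $2$, the Reeb space $W_f$, and hence the subpolyhedron ${W_f}^{\prime}\subset W$ onto which $W$ collapses, has dimension $2$. The smooth triangulation furnished by the third assertion of Proposition \ref{prop:2} lets me work in the PL category, and a compact PL manifold that collapses to a subpolyhedron of dimension $2$ lying in its interior is a regular neighborhood of that subpolyhedron. Consequently $W$ admits a handle decomposition built up from the empty set using handles of index at most $2$; dualizing, $W$ is obtained from a collar $M\times[0,1]$ of its boundary by attaching handles all of index at least $\dim W-2=3$.

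Finally, attaching a handle of index $\lambda$ to $M\times[0,1]$ is, up to homotopy, the attachment of a $\lambda$-cell, and the inclusion of a space into the result of attaching cells of dimensions $\geq 3$ induces isomorphisms on $\pi_0$ and $\pi_1$ (indeed it is $2$-connected). Performing these attachments in order therefore shows that $i_{\ast}\colon\pi_k(M)\to\pi_k(W)$ is an isomorphism for $k=0,1$; in particular $W$ is connected because $M$ is, consistent with $W\simeq W_f$. Combining this with the isomorphism $r_{\ast}$ yields that ${q_f}_{\ast}=r_{\ast}\circ i_{\ast}$ is an isomorphism for $k=0,1$, as claimed. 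The one step demanding care is the passage from the collapse to the dual handle decomposition with handles of index $\geq 3$: I would either invoke regular-neighborhood theory directly, or argue by general position, using that a loop (dimension $1$) and a nullhomotopy (dimension $2$) can be pushed off the cocores of these handles, which have dimension $\le \dim W-3=2$, since $1+2<5$ and $2+2<5$.
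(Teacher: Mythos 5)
Your proposal is correct and takes essentially the same route as the paper: although the paper defers Corollary \ref{cor:1} itself to \cite{saekisuzuoka}, its proof of the analogous Corollaries \ref{cor:2} and \ref{cor:3} is precisely your argument---$r$ is a homotopy equivalence, the collapse of $W$ onto the $2$-dimensional subpolyhedron ${W_f}^{\prime}$ yields a handle decomposition with handles of index at most $2$, and dualizing exhibits $W$ as $M\times[0,1]$ with handles of index at least $3$ attached, so $i_{\ast}$ is an isomorphism on ${\pi}_0$ and ${\pi}_1$. Your additional remarks (the functoriality identity ${q_f}_{\ast}=r_{\ast}\circ i_{\ast}$, the regular-neighborhood justification of the dual decomposition, and the general-position alternative) merely make explicit steps the paper leaves implicit.
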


See also \cite[Lemma 1]{kitazawa2}, which is a similar statement for simple fold maps whose regular fibers are disjoint unions of spheres
 between manifolds of arbitary dimensions.

\subsection{More general simple fold maps and compact manifolds bounded by their source manifolds}
\label{subsec:3.2}

We study more general simple fold maps. Before precise studies, we define several technical conditions on the maps, which we often pose in the present paper, to obtain theorems.  

\begin{Def}
\label{def:4}
 Let $f:M \rightarrow N$ be a simple fold map sastisfying $m>n \geq 1$.
\begin{enumerate}
\item For each connected component $C$ of the set $q_f(q_f(S(f)))$, let $N(C)$ be a small regular neighborhood such
 that ${q_f}^{-1}(N(C))$ is regarded as the total space of a bundle over $C$. We
 call the bundle ${q_f}^{-1}(N(C))$ a {\it monodromy representation} on $C$ and if we can take this as a topologically trivial bundle, then $f$ is said to
 {\it have a topologically trivial monodromy} on $C$ and the bundle is said to be a {\it topologically trivial
 monodromy bundle} on $C$. If $f$ has a topologically trivial monodromy on $C$ for each component $C$, then $f$
 is said to {\it have topologically trivial monodromies on the singular part}. 

We can replace "topologically" by "PL"
 and "smooth" and define similar terminologies.
\item Let $N(q_f(S(f)))$ be a small regular neighborhood of the set $q_f(S(f))$. For any connected component $R$ of
 the set $W_f-{\rm Int} N(q_f(S(f)))$, ${q_f}^{-1}(R)$ is regarded
 as the total space of a bundle over $R$. We
 call the bundle ${q_f}^{-1}(R)$ a {\it monodromy representation} on $R$ and if we
 can choose the bundle ${q_f}^{-1}(R)$ as a topologically trivial bundle, then $f$ is said to
 {\it have a topologically trivial monodromy} on $R$ and the bundle is said to be a {\it topologically trivial monodromy bundle} on $R$. If $f$ has a
 topologically trivial monodromy on $R$ for each component $R$, then $f$
 is said to {\it have topologically trivial monodromies on the regular part}.

We can replace "topologically" by "PL"
 and "smooth" and define similar terminologies.
\end{enumerate}
\end{Def}

\begin{Thm}
\label{thm:1}
Let $M$ be a closed and connected manifold of dimension $m$, $f:M \rightarrow {\mathbb{R}}^n$ be a simple fold
 map and $m>n \geq 1$ hold. Suppose that $f$ has topologically trivial monodromies on the singular part and the regular part and that the following conditions hold.

\begin{enumerate}
\item For each connected component $C$ of the set $q_f(S(f))$, we can take a topologocally trivial monodromy bundle on $C$ as a bundle whose fiber is a compact topological manifold ${F^{\prime}}_C$ with non-empty boundary.

\item Let $N(C)$ be a small regular neighborhood of each connected component $C$ as in Definition \ref{def:4} and let $N(q_f(S(f)))$ be the disjoint
 union of $N(C)$ for every $C$. For any connected component $R$ of
 the set $W_f-{\rm Int} N(q_f(S(f)))$, we can take a topologically trivial monodromy bundle on $R$ as a bundle whose fiber is a topological manifold $F_R$ bounding a compact topological manifold $E_R$.

\item For each connected component $C$ of the set $q_f(S(f))$, let us denote the family of all the connected components of
 the set $W_f-{\rm Int} N(q_f(S(f)))$ intersecting $N(C)$ by $\{R_{C_{\lambda}}\}$. Then the boundary $\partial {F^{\prime}}_C$ of the
 manifold ${F^{\prime}}_C$ before
 is represented as the disjoint union of closed topological manifolds
 $\{F_{C_{\lambda}}\}$ {\rm (}$\lambda \in \Lambda${\rm )}, where $F_{C_{\lambda}}$ is homeomorphic
 to $F_{R_{C_{\lambda}}}$. Furthermore, the subbundle corresponding to the boundary $\partial {F^{\prime}}_C \subset F^{\prime}$ of a topologically
 trivial monodromy bundle on $C$ is trivial.  

\end{enumerate}

 Then, we can construct a topologically trivial bundle $W_R$ over
 $R$ whose fiber is a compact topological manifold $E_R$ satisfying so that the topologically trivial monodromy bundle over
 $R$ before is a subbundle of this bundle correspondig to the boundary. Moreover as a result, we can construct a
 topologically trivial bundle over $C$ whose fiber is a closed topological manifold $F_C$ obtained by attaching
 the manifolds $\{F_{C_{\lambda}}\}$ {\rm (}$\lambda \in \Lambda${\rm )} on the boundary of ${F^{\prime}}_C$. 
 
  Furthermore, assume also
 that $F_C$ bounds a compact topological manifold $E_C$. Then, there exist a compact topological manifold $W$ of dimension $m+1$ bounded by $M$.

\ \ \ Especially, let similar conditions hold even if we replace "topologically" by "PL". Then, we can construct $W$ as
 a PL manifold and we can obtain a polyhedron $V$ whose dimension is smaller than $W$ and continuous maps $r:W \rightarrow V$
 and $s:V \rightarrow W_f$ and triangulations of $W$, $V$ and $W_f$ so that the following conditions hold.
 
\begin{enumerate}
\item $r$ and $s$ are simplicial.
\item If $p$ is in a connected component in $W_f-{\rm Int} N(q_f(S(f)))$, then $s^{-1}(p)$ is a subpolyhedron of $V$, $(s \circ r)^{-1}(p)$ collapses
 to $s^{-1}(p)$ and the dimension of $s^{-1}(p)$ is smaller than that of $(s \circ r)^{-1}(p)$.
\item For each connected component $C$ of $N(q_f(S(f)))$, there exists a PL subbudle $V_C$ of the trivial PL $E_C$-bundle over $C$ which is trivial and whose fiber
 is a subpolyhedron of $E_C$, simple homotopy equivalent to $E_C$ and of dimension smaller than $E_C$. 
\item $W$ collapses to a subpolyhedron ${V}^{\prime}$ such that $r {\mid}_{{V}^{\prime}}:{V}^{\prime} \rightarrow V$ is a PL homeomorphism.
\end{enumerate}

In addition, let similar conditions hold even if we replace "PL" by "smooth". Then, we can take the manifold $W$ as a
 smooth manifold and we can obtain similar polyhedron, continuous maps and smooth triangulations.

\end{Thm}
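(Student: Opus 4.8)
The plan is to prove the statement by the piecewise construction announced in the introduction: decompose $W_f$ into the singular neighborhoods $N(C)$ and the regular pieces $R$, build a manifold over each piece by filling in the (trivial) monodromy fibers, and then glue the results along their common boundaries. Throughout I use the local descriptions of $W_f$ and of the bundles $q_f^{-1}(N(C))$ and $q_f^{-1}(R)$ supplied by Proposition \ref{prop:1}, together with the trivialisations provided by the hypothesis that $f$ has (topologically, PL, or smoothly) trivial monodromies on the singular and regular parts in the sense of Definition \ref{def:4}.

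First, over a regular piece $R$ the trivial monodromy bundle is $q_f^{-1}(R) \cong R \times F_R$, so I set $W_R := R \times E_R$, a trivial $E_R$-bundle over $R$; since $\partial E_R = F_R$, the monodromy bundle is exactly the boundary subbundle $R \times \partial E_R$, and $\dim W_R = n + (m-n+1) = m+1$. Next, over a singular component $C$ the trivial monodromy bundle is $q_f^{-1}(N(C)) \cong C \times {F'}_C$, whose fibre ${F'}_C$ is a compact manifold with $\partial {F'}_C = \bigsqcup_\lambda F_{C_\lambda}$. Condition~(3), together with the triviality of the boundary subbundle, lets me cap each boundary component $F_{C_\lambda}\cong F_{R_{C_\lambda}}$ by the filling $E_{C_\lambda}=E_{R_{C_\lambda}}$ coming from the adjacent regular piece, producing a closed fibre $F_C = {F'}_C \cup_{\partial}\bigsqcup_\lambda E_{C_\lambda}$ and hence a trivial $F_C$-bundle over $C$; assuming $F_C$ bounds $E_C$, I set $W_C := C \times E_C$, which again has dimension $(n-1)+(m-n+2)=m+1$.

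I then glue. Since $C$ is closed, $\partial W_C = C \times F_C$ splits into the external part $C \times {F'}_C = q_f^{-1}(N(C)) \subset M$ and the internal parts $C \times E_{C_\lambda}$; I identify each internal part with the matching boundary piece $(\partial R_{C_\lambda}\text{ near }C)\times E_{C_\lambda}$ of $W_{R_{C_\lambda}}$. After carrying out these identifications for all $C$ and $\lambda$, the remaining boundary is precisely $\bigcup_C q_f^{-1}(N(C)) \cup \bigcup_R q_f^{-1}(R) = M$, so $W := \bigl(\bigsqcup_R W_R \sqcup \bigsqcup_C W_C\bigr)/{\sim}$ is a compact $(m+1)$-manifold with $\partial W = M$. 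The main obstacle here, and the place where the triviality hypotheses are essential, is to choose the several trivialisations so that the gluing maps agree on the overlaps $C \times E_{C_\lambda}$; the triviality of the boundary subbundle in Condition~(3) is exactly what guarantees that the capping fibres $E_{C_\lambda}$ fit together coherently over all of $C$. The same construction is carried out verbatim in the PL and in the smooth categories once the corresponding trivial monodromy hypotheses are assumed, giving $W$ as a PL, respectively smooth, manifold.

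Finally I construct $V$ and the collapses. Each fibre $E_R$ and $E_C$ is a compact PL manifold with non-empty boundary, hence collapses to a spine, i.e. a subpolyhedron $v_R$, resp.\ $v_C$, which is simple homotopy equivalent to it and of strictly smaller dimension. Taking the fibrewise spines gives a subpolyhedron $V := \bigcup_R (R \times v_R) \cup \bigcup_C (C \times v_C)$, a projection $s:V \to W_f$, and a fibrewise collapse $r:W \to V$; after choosing compatible (smooth) triangulations of $W$, $V$ and $W_f$ one arranges $r$ and $s$ to be simplicial, $s^{-1}(p) = v_R$ to be a subpolyhedron of smaller dimension than $(s\circ r)^{-1}(p)=E_R$ over a regular point $p$, and $V_C := C \times v_C$ to be the required trivial subbundle of $W_C$. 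Because the product collapse $R \times E_R \searrow R \times v_R$ assembles with $C \times E_C \searrow C \times v_C$, $W$ collapses to a copy $V'$ of $V$ on which $r$ is a PL homeomorphism. The delicate point is coherence at the interfaces $C \times E_{C_\lambda}$: one must choose the spine $v_C$ and the collapse of $E_C$ so that, on the boundary region $E_{C_\lambda} \subset \partial E_C$, they restrict compatibly with the collapse $E_{C_\lambda} \searrow v_{C_\lambda}$ used on the regular side, which can be achieved by collapsing $E_C$ inward from a collar of its boundary in a manner subordinate to the boundary spines.
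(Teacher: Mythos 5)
Your proposal is correct and follows essentially the same route as the paper's own proof: build the trivial $E_R$-bundles $W_R=R\times E_R$ over the regular pieces and the trivial $E_C$-bundles $W_C=C\times E_C$ over the singular components (with $F_C={F'}_C$ capped off by the fillings $E_{R_{C_\lambda}}$, which is the correct reading of the theorem's phrasing), glue along the interface pieces $C\times E_{R_{C_\lambda}}$ so that $\partial W=M$, and then obtain $V$, $r$, $s$ and the collapse $W\searrow V'$ from fibrewise spines of $E_R$ and $E_C$ coming from handle decompositions, chosen compatibly at the interfaces. If anything, your write-up is more explicit than the paper's proof about the dimension counts, the role of the triviality of the boundary subbundle in Condition (3), and the boundary-subordinate choice of the spine of $E_C$, all of which the paper only asserts in passing.
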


\begin{proof}

We can easily obtain a topological manifold $W$ by gluing the following ($m+1$)-dimensional manifolds.

\begin{itemize}
\item The total space $W_R$ of the topologically trivial $E_R$-bundle over $R$ explained in the assumption.
\item The total space $W_C$ of a topologically trivial $E_C$-bundle over $C$ such that the trivial $F_C$-bundle over $C$ explained in
 the assumption is a subbundle of this bundle satisfying $\partial E_C=F_C$.
\end{itemize}

We can obtain $W$ as a PL (smooth) manifold in the PL (resp. smooth) case.

We show additional statements in the PL case and by using a similar method we
 can show them in the smooth case.   

For any connected component $R$ of $W_f-{\rm Int} N(q_f(S(f)))$, by considering a handle decomposition of $E_R$ in the
 PL category, we obtain a polyhedron which $E_R$ collapses to and whose dimension
 is smaller than that of $E_R$ and thus, we obtain a
 trivial PL subbundle $V_R$ of the trivial PL bundle $W_R$ over
 $R$ whose fiber is a PL manifold $E_R$. $V_R$ is a polyhedron whose dimension is smaller than that of $W_R$ and we obtain
 a polyhedron ${{V}^{\prime}}_R$, continuous maps $r_R:W_R \rightarrow V_R$
 and $s_R:V_R \rightarrow R$ and triangulations of $W_R$, $V_R$, ${{V}^{\prime}}_R$ and $R$ so that they satisfy desired conditions.

For any connected component $C$ of the set $q_f(S(f))$, by considering a handle decomposition
 of $E_C$ in the
 PL category, we obtain a polyhedron which $E_C$ collapses to and whose dimension
 is smaller than that of $E_C$ and thus, we obtain a
 trivial PL subbundle $V_C$ of the trivial PL bundle $W_C$ over
 $R$ whose fiber is a PL manifold $E_C$. We can take $V_C$ so that we can
 attach $V_C \bigcap \partial W_C$ with the disjoint union of $V_{R_{C_{\lambda}}} \bigcap \partial W_{R_{C_{\lambda}}}$ for every $C_{\lambda}$ compatiably together. 

 This completes the proof.

\end{proof}

We have the following corollary similar to Corollary \ref{cor:1}, which we prove later with Corollary \ref{cor:3}.

\begin{Cor}
\label{cor:2}
In the situation of Theorem \ref{thm:1}, let $i:M \rightarrow W$ be the natural inclusion. Then, the
 homomorphism $r_{\ast} \circ i_{\ast}:{\pi}_j(M) \rightarrow {\pi}_j(V)$ is an isomorphism for $0 \leq j \leq m-\dim{V}-1$.
\end{Cor}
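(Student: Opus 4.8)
The plan is to factor $r_{\ast} \circ i_{\ast}$ through $\pi_j(W)$ and treat the two homomorphisms $i_{\ast}:\pi_j(M) \to \pi_j(W)$ and $r_{\ast}:\pi_j(W) \to \pi_j(V)$ separately. First I would show that $r$ is a homotopy equivalence. By the last additional assertion of Theorem \ref{thm:1}, $W$ collapses to the subpolyhedron $V^{\prime}$, so $V^{\prime}$ is a (strong) deformation retract of $W$ and the inclusion $\iota:V^{\prime} \hookrightarrow W$ is a homotopy equivalence; moreover $r{\mid}_{V^{\prime}}:V^{\prime} \to V$ is a PL homeomorphism, hence a homotopy equivalence. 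Since $r \circ \iota = r{\mid}_{V^{\prime}}$, the two-out-of-three property for homotopy equivalences forces $r$ itself to be one. Consequently $r_{\ast}:\pi_j(W) \to \pi_j(V)$ is an isomorphism for every $j$, and it remains only to control $i_{\ast}$.

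Second, I would analyze the connectivity of the pair $(W,M)=(W,\partial W)$. Since $W$ collapses to $V^{\prime}$ and $\dim V^{\prime} = \dim V =: d < m+1 = \dim W$, the polyhedron $V^{\prime}$ is a spine of $W$ of dimension $d$. Dualizing the associated handle decomposition (equivalently, replacing a Morse function whose top level is $\partial W$ and whose critical points all have index $\le d$ by its negative), $W$ is obtained from a collar $M \times [0,1]$ by attaching handles all of index $\ge (m+1)-d$. Up to homotopy this attaches to $M$ only cells of dimension $\ge (m+1)-d$, so the pair $(W,M)$ is $(m-d)$-connected, i.e. $\pi_k(W,M)=0$ for $0 \le k \le m-d$.

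Third, feeding this into the long exact homotopy sequence of the pair $(W,M)$ yields that $i_{\ast}:\pi_j(M) \to \pi_j(W)$ is an isomorphism for $j \le m-d-1$ (and an epimorphism for $j=m-d$). Composing with the isomorphism $r_{\ast}$ gives that $r_{\ast} \circ i_{\ast}:\pi_j(M) \to \pi_j(V)$ is an isomorphism for $0 \le j \le m-\dim V-1$, as claimed.

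The main obstacle I anticipate is making the second step rigorous at the level of homotopy groups rather than homology: I must justify that the collapse to a $d$-dimensional spine really produces a handle (equivalently cell) decomposition of $(W,M)$ with all handles of index $\ge (m+1)-d$, including the correct behaviour of $\pi_0$ and $\pi_1$. This requires knowing that $V^{\prime}$ lies in the interior of $W$, so that $\partial W$ is a genuine top regular level of the dual Morse function, together with careful bookkeeping when translating the elementary collapses into high-index handles. One could instead read off the homology statement $H_k(W,M)=0$ for $k \le m-d$ directly from Lefschetz duality, $H_k(W,M) \cong H^{m+1-k}(W) \cong H^{m+1-k}(V)$, which vanishes once $m+1-k>d$; but upgrading this to homotopy groups then needs the relative Hurewicz theorem plus the simple-connectivity input that the handle argument already supplies, so I would favour the handle-theoretic route.
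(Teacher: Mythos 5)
Your proposal is correct and follows essentially the same route as the paper's proof: the paper likewise reduces to $i_{\ast}$ by using that $r$ is a homotopy equivalence, then regards $W$ (via the collapse to the $\dim V$-dimensional spine) as built from handles of index at most $\dim V$, dualizes to view $W$ as $M \times [0,1]$ with handles of index at least $m-\dim V+1$ attached along $M \times \{1\}$, and concludes the isomorphism in the stated range. Your additional care (deriving the homotopy equivalence of $r$ from $W \searrow V^{\prime}$ and $r{\mid}_{V^{\prime}}$ by two-out-of-three, and checking the long exact sequence indices) only fills in details the paper leaves implicit.
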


As specific cases, we have the following two statements.

\begin{Thm}
\label{thm:2}
Let $m,n \in \mathbb{N}$ and let $m>n \geq 1$ hold. Let $M$ be a closed and connected manifold of dimension $m$ and $f:M \rightarrow {\mathbb{R}}^n$ be a simple fold
 map. Suppose that $f$ has topologically trivial monodromies on the singular part.
\begin{enumerate}
\item
\label{thm:2.1}
 Suppose that $f$ has topologically trivial monodromies on the regular part. We also assume that for each closed manifold of dimension $m-n$ or $m-n+1$, there
 exists a compact topological manifold bounded by it. Then, we can construct a compact topological manifold $W$ of dimension $m+1$ bounded by $M$ in
 Theorem \ref{thm:1}. In the PL case, we can construct $W$ as
 a PL manifold and we can obtain a polyhedron $V$ whose dimension is smaller than $W$ and continuous maps $r:W \rightarrow V$
 and $s:V \rightarrow W_f$ and triangulations of $W$, $V$ and $W_f$ so that the four conditions listed in Theorem \ref{thm:1} hold. In the smooth case, in
 addition, we can construct $W$ as a smooth manifold. 
\item
\label{thm:2.2}
 Suppose that the fiber of the $F_C$-bundle over $C$ in Theorem \ref{thm:1} is a sphere. Then, we can construct a compact topological manifold $W$ of dimension $m+1$ bounded by $M$ in Theorem \ref{thm:1}. In the PL
 and smooth cases, we obtain objects similar to those of Theorem \ref{thm:1} and (\ref{thm:2.1}) of this theorem. 
\end{enumerate}
\end{Thm}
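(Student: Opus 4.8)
The statement of Theorem \ref{thm:2} is really two specializations of Theorem \ref{thm:1}, and the whole point is that in each case the hypotheses of Theorem \ref{thm:1} become automatic, so the construction of $W$ (and, in the PL/smooth settings, of $V$, $r$, $s$ and the four collapsing/bundle conditions) can be quoted verbatim. So the plan is to verify, for each of (\ref{thm:2.1}) and (\ref{thm:2.2}), that the three numbered hypotheses of Theorem \ref{thm:1} hold, and then invoke Theorem \ref{thm:1} as a black box for everything else. Since $f$ is a simple fold map with $m>n$, Proposition \ref{prop:1} tells us that a regular fiber is a closed $(m-n)$-manifold and the fiber $F'_C$ of a monodromy bundle over a singular component $C$ is a compact $(m-n+1)$-manifold with nonempty boundary; the boundary components are exactly the regular fibers $F_{R_{C_\lambda}}$ of the adjacent regular pieces, glued as prescribed by parts (\ref{prop:1.3})–(\ref{prop:1.5}). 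This is the local picture I would spell out first.

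First I would treat case (\ref{thm:2.1}). Hypotheses (1) and (2) of Theorem \ref{thm:1} ask that the regular fibers $F_R$ (closed $(m-n)$-manifolds) and the singular fibers $F'_C$ bound compact manifolds. The fiber $F_R$ is a closed manifold of dimension $m-n$, so by the assumed bounding hypothesis it bounds some compact $E_R$; similarly $F'_C$ is a compact $(m-n+1)$-manifold with boundary, and its double — or rather the closed manifold $F_C$ obtained by capping as in hypothesis (3) — is a closed manifold of dimension $m-n+1$, which by assumption bounds a compact $E_C$. Hypothesis (3) itself is the compatibility of the boundary decomposition $\partial F'_C=\bigsqcup_\lambda F_{C_\lambda}$ with $F_{C_\lambda}\cong F_{R_{C_\lambda}}$ together with triviality of the boundary subbundle; this is precisely the structural content of Proposition \ref{prop:1}, which exhibits $F'_C$ as a fiber glued from the adjacent regular fibers along a trivial collar. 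Once (1)–(3) are checked, Theorem \ref{thm:1} hands us $W$ (topological), and in the PL and smooth categories the polyhedron $V$, the maps $r,s$ and the four conditions, since the bounding hypothesis was assumed in the relevant category.

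Next I would treat case (\ref{thm:2.2}), where the regular fiber is a sphere. Here every regular fiber $F_R\cong S^{m-n}$ bounds the disc $D^{m-n+1}$, so hypothesis (1)/(2) of Theorem \ref{thm:1} holds with $E_R=D^{m-n+1}$; and the singular fiber $F'_C$, being built from discs along trivial collars, caps off to a closed manifold $F_C$ that is again a sphere (this is essentially the sphere-bundle mechanism already used in Proposition \ref{prop:2} and \cite[Lemma 1]{kitazawa2}), which bounds a disc $E_C=D^{m-n+1}$. Thus no auxiliary bounding assumption is needed in this case, and Theorem \ref{thm:1} applies directly to produce $W$ and, in the PL/smooth settings, the associated $V,r,s$ and the four conditions. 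I expect the main obstacle to be entirely bookkeeping rather than conceptual: namely verifying hypothesis (3) of Theorem \ref{thm:1}, i.e.\ that the capping $F'_C \rightsquigarrow F_C$ is globally compatible over all of $C$ (triviality of the boundary subbundle) and that the resulting $F_C$ is genuinely a sphere in case (\ref{thm:2.2}) rather than merely a homotopy/homology sphere. This compatibility is exactly where the "topologically trivial monodromies on the singular part" hypothesis is consumed, and I would be careful to cite Proposition \ref{prop:1} (\ref{prop:1.3})–(\ref{prop:1.5}) to pin down the fiber $F'_C$ and its trivial boundary collar before capping.
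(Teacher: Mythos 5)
Your treatment of part (\ref{thm:2.1}) is fine and matches the paper, whose entire argument there is that the statement follows immediately from Theorem \ref{thm:1}: the auxiliary bounding hypothesis supplies $E_R$ for the closed $(m-n)$-dimensional fibers $F_R$ and $E_C$ for the closed $(m-n+1)$-dimensional manifolds $F_C$, and triviality of the monodromies makes the remaining structural hypotheses (in particular the triviality of the boundary subbundle) automatic.

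Part (\ref{thm:2.2}) contains a genuine gap, and it stems from a misreading of the hypothesis. The theorem assumes that the fiber of the \emph{$F_C$-bundle over $C$} constructed in Theorem \ref{thm:1} is a sphere --- that is, the closed $(m-n+1)$-dimensional manifold $F_C$, obtained by capping off $\partial F'_C$, is assumed to be a sphere. You instead assume that the \emph{regular fibers} are spheres ($F_R \cong S^{m-n}$) and then try to \emph{derive} that $F_C$ is a sphere, asserting that $F'_C$, "being built from discs along trivial collars, caps off to" a sphere. That derivation is exactly the step you yourself flag at the end as needing care ($F_C$ could a priori be only a homotopy sphere), and it is not mere bookkeeping: it would require the fiberwise analysis of singular fibers carried out in the Saeki--Suzuoka setting, which is the content of Proposition \ref{prop:2} and \cite[Lemma 1]{kitazawa}, not of Theorem \ref{thm:2}. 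Under the correct reading this problematic step simply disappears: $F_C$ being a sphere is \emph{given}, and the paper's whole proof of (\ref{thm:2.2}) is the one-line observation that a sphere bundle is naturally a subbundle of a disc bundle whose fiber dimension is larger by one, so the $F_C$-bundle over $C$ extends to a disc bundle furnishing $W_C$ without any bounding assumption in dimension $m-n+1$. Your filling mechanism (sphere bundle sitting inside the associated disc bundle) is the same as the paper's, but you apply it to the wrong sphere; note also the dimension slip in your write-up: since $\dim F_C = m-n+1$, the disc is $E_C = D^{m-n+2}$, not $D^{m-n+1}$.
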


\begin{proof}
The first statement follows immediately from Theorem \ref{thm:1}. We can prove the second statement by noticing that a sphere bundle is naturally a
 subbundle of a disc bundle whose dimension is larger than that of the sphere bundle by one.    
\end{proof}


 We introduce and prove Theorem \ref{thm:3}, which is regarded as an extension of a
 specific case of Theorem \ref{thm:2} (\ref{thm:2.2}). We define the {\it mapping cylinder} of a continuous map $c:X \rightarrow Y$ as
 the quotient space of $(X \times [0,1]) \sqcup Y$ obtained by identifying $(x,1)$ and $c(x)$ ($x \in X$).       

\begin{Thm}
\label{thm:3}
Let $M$ be a closed manifold of dimension $m$, $N$ be a manifold of dimension $n$ without
 boundary and $f:M \rightarrow N$ be a simple fold map. Let $n \geq 1$ and $m-n \geq 2$. 
\begin{enumerate}
\item For any point $p \in W_f-q_f(S(f))$, ${q_f}^{-1}(p)$ is an almost-sphere of dimension $m-n$ or PL homeomorphic to the product of two standard spheres
 whose dimensions do not coincide. If a connected component $R$ of $W_f-q_f(S(f))$ contains a point $p$ such that ${q_f}^{-1}(p)$ is an almost-sphere, then $R$ is said
 to be an {\rm AS-region}. 

Furthermore, if
 ${q_f}^{-1}(p)$ is not an almost-sphere, then each connected component of the boundary of the closure of the connected
 component of $W_f-q_f(S(f))$ containing $p$ contains no branched point and it is also the boundary of the closure of an AS-region. 

\item Let $C$ be a connected component of the set $S(f)$ such that $q_f(C)$ consists of branched points, then every connected
 component of $W_f-q_f(S(f))$ such that the boundary of its closure contains $C$ as a connected component is an AS-region. 
\item Let $C$ be a connected component of the set $S(f)$ and let $N(C)$ be a small tubular neighborhood in Proposition \ref{prop:1}(\ref{prop:1.3}). Let $0<k<m-n$ be an integer and let the fiber of the bundle ${q_f}^{-1}(N(C))$ be
 PL homeomorphic to an ($m-n+1$)-dimensional manifold
 simple homotopy equivalent to $S^{m-n-k}$ with the interior of a smoothly embedded ($m-n+1$)-dimensional stadard closed disc removed. Then, for
 any point $p$ in the connected
 component $R_C$ of $W_f-q_f(S(f))$ such that the boundary of its closure contains $C$ as a connected component, ${q_f}^{-1}(p)$ is PL
 homeomorphic to $S^k \times S^{m-n-k}$ and we can take the monodromy representation on $R_C$ as a PL $S^k \times S^{m-n-k}$-bundle over $R_C$ whose
 structure group consists of PL homeomorphisms regarded as bundle isomorphisms on this trivial $S^{m-n-k}$-bundle $S^k \times S^{m-n-k}$ over $S^{m-n-k}$. In this
 situation,
 $R_C$ is said to be a {\rm $k$ S-region}. 

 Furthermore, each connected component of the boundary of the closure of a $k$ S-region satisfies the condition before which $C$ satisfies. Such
 connected components of the boundary of the closure of a $k$ S-region are said to be {\rm $k$ S-loci}.
\end{enumerate}
\ \ \ Then, there exist a compact PL manifold $W$ of dimension $m+1$ such that $\partial W = M$, a polyhedron $V$
 and continuous maps $r: W \rightarrow V$ and $s:V \rightarrow W_f$ and the following two hold.
\begin{enumerate}
\item There exist a triangulation of $W$, a triangulation of $V$ and a triangulation of $W_f$ such that $r$ is a simplicial map and that $s$
 is a simplicial map and the following three hold.
\begin{enumerate}
\item For each $p \in V$, $r^{-1}(p)$ collapses to a point and $r$ is a homotopy equivalence.
\item If $p$ is in the closure of an AS-region in $W_f$, then $s^{-1}(p)$ is a point. If $p$ is in an AS-region in $W_f$ and $q \in s^{-1}(p)$, then
 $r^{-1}(q)$ is ${\rm PL}$ homeomorphic to $D^{m-n+1}$. 
\item If $p$ is in a $k$ S-region in $W_f$ whose closure is bounded by a disjoint union of $k$ S-loci, then $s^{-1}(p)$ is PL homeomorphic to $S^k$. If
 $p$ is in a $k$ S-region in $W_f$ whose closure is bounded by a disjoint union of $k$ S-loci and $q \in s^{-1}(p)$, then $r^{-1}(q)$ is PL homeomorphic
 to $D^{m-n-k+1}$.
\end{enumerate}
\item $W$ collapses to a subpolyhedron ${V}^{\prime}$ such that $r {\mid}_{{V}^{\prime}}:{V}^{\prime} \rightarrow V$ is a PL homeomorphism.
\end{enumerate}

If $M$ is orientable, then we can construct $W$ as an orientable manifold.

\end{Thm}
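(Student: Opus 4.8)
The plan is to follow the piece-by-piece strategy of Theorem \ref{thm:1}: decompose $W_f$ by means of Proposition \ref{prop:1}, fill in the fibres over each region by a suitable bundle, and glue the resulting pieces over the regular neighbourhoods of the singular part. First I would fix small regular neighbourhoods $N(C)$ of each connected component $C$ of $S(f)$ as in Proposition \ref{prop:1}, so that $W_f$ is the union of these neighbourhoods together with the regular regions, each of which is, by hypothesis, an AS-region or a $k$ S-region. Over the interior of an AS-region $R$ the map $q_f$ is a PL $S^{m-n}$-bundle (the fibres being almost-spheres, hence PL standard spheres); since an almost-sphere bounds a standard disc canonically, I would cap off each fibre to obtain a PL $D^{m-n+1}$-bundle $W_R$ over $R$ whose boundary along $R$ is $q_f^{-1}(R)$. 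Over a $k$ S-region $R_C$ the bundle $q_f^{-1}(R_C)$ is, by part (3) of the hypotheses, a PL $S^k \times S^{m-n-k}$-bundle whose structure group preserves the product and acts on the $S^{m-n-k}$-factor by bundle isomorphisms; I would cap off only that factor, replacing $S^{m-n-k}$ by $D^{m-n-k+1}$, to obtain a PL $S^k \times D^{m-n-k+1}$-bundle $W_{R_C}$.

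Next I would construct the interpolating pieces over the neighbourhoods $N(C)$. For a component $C$ whose image consists of branched points or of index-$0$ singular values, the local models of Proposition \ref{prop:1} (\ref{prop:1.3}) and (\ref{prop:1.5}) prescribe the $K$-bundle, respectively the linear $D^{m-n+1}$-bundle, and I would fill these so that the resulting manifold $W_C$ restricts on each boundary piece to the filling already chosen over the adjacent region. The delicate case is a $k$ S-locus $C$: here the fibre of $q_f^{-1}(N(C))$ is the prescribed $(m-n+1)$-manifold $X$ simple homotopy equivalent to $S^{m-n-k}$ with the interior of a disc removed, and by part (1) each such $C$ bounds both the $k$ S-region $R_C$ and an AS-region. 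I would choose $W_C$ to be a bundle over $C$ whose fibre is a cobordism, relative to the boundary, interpolating between $S^k \times D^{m-n-k+1}$ on the $k$ S-side and $D^{m-n+1}$ on the AS-side; the precise form of $X$ is exactly what makes such a fibrewise cobordism exist and be compatible with the product structure. Gluing $\{W_R\}$, $\{W_{R_C}\}$ and $\{W_C\}$ along the common boundary bundles yields a compact PL manifold $W$ with $\partial W = M$, orientable when $M$ is, by the orientability of all the pieces and of the gluings.

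Finally I would build $V$ and the maps $r,s$. On each AS-piece $r$ collapses the $D^{m-n+1}$-fibres to points, so the image is the region $R$ itself; on each $k$ S-piece $r$ collapses the $D^{m-n-k+1}$-factors to points, so the image is the $S^k$-bundle $V_{R_C}$; over the neighbourhoods $N(C)$ these collapses are compatible and assemble to a polyhedron $V$ together with $r:W \rightarrow V$ and a projection $s:V \rightarrow W_f$. Choosing, in the PL category, compatible triangulations of $W$, $V$ and $W_f$ (and smooth triangulations in the smooth case) makes $r$ and $s$ simplicial and gives the prescribed fibres: $s^{-1}(p)$ is a point over an AS-region with $r^{-1}(q)$ PL homeomorphic to $D^{m-n+1}$, and $s^{-1}(p)$ is PL homeomorphic to $S^k$ over a $k$ S-region bounded by $k$ S-loci with $r^{-1}(q)$ PL homeomorphic to $D^{m-n-k+1}$. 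Each $r^{-1}(p)$ is a disc, hence collapses to a point and $r$ is a homotopy equivalence; the disc-fibres, of positive dimension since $m-n \geq 2$ and $0<k<m-n$, collapse onto their zero-sections, which assemble to a subpolyhedron ${V}^{\prime} \subset W$ with $r{\mid}_{{V}^{\prime}}$ a PL homeomorphism onto $V$, and $W$ collapses onto ${V}^{\prime}$.

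The main obstacle is the compatibility of the fillings across the singular neighbourhoods: the cap chosen over a $k$ S-region must match, through the interpolating piece $W_C$, the cap chosen over the adjacent AS-region, coherently in the PL structure and compatibly with the prescribed structure groups of Proposition \ref{prop:1}. Verifying that the specific fibre $X$ over a $k$ S-locus admits a fibrewise cobordism realising this matching, and that these matchings can be made simultaneously over all loci while preserving orientations, is the technical heart of the argument; the remaining bundle-theoretic and simplicial bookkeeping is routine once this is in place.
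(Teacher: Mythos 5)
Your proposal follows essentially the same route as the paper's proof: the same decomposition of $W_f$ into neighbourhoods of the definite-fold image, the branched points and the S-loci together with the regular AS- and $k$ S-regions, the same fibrewise cappings ($D^{m-n+1}$-bundles over AS-regions, $S^k \times D^{m-n-k+1}$-bundles capping only the $D^{m-n-k+1}$-factor over $k$ S-regions), and the same two-stage structure $r:W \rightarrow V$, $s:V \rightarrow W_f$ with ${V}^{\prime}$ assembled from the zero-sections. The one step you flag but leave abstract --- the fibrewise cobordism over a $k$ S-locus --- is exactly what the paper makes concrete: a PL $D^{m-n+2}$-bundle $V_{C_k}$ whose boundary sphere decomposes as $B_{C_k} {\bigcup}_{{\phi}_{{C_1}_k} \times {\phi}_{{C_2}_k}} (D^{m-n-k+2} \times S^{k-1})$ (this is where the simple-homotopy-equivalence hypothesis on the fibre enters), together with a bundle $P(C_k)$ of mapping cylinders of the constant map $S^{k-1} \rightarrow [0,1]$ interpolating between the point-fibres of $s$ on the AS-side and the $S^{k-1}$-fibres on the S-side.
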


\begin{proof}
We construct a compact PL manifold of dimension $m+1$ bounded by $M$. We denote the set of all the singular points of index $i$ by $F_i(f)$. \\
 \\
Step 1 \ \ \ Around a regular neighborhood of $q_f(F_0(f))$.

 $q_f(F_0(f))$ is the image of the set $F_0(f)$ of all the definite fold points of $f$. Let $N(q_f(F_0(f)))$ be a small
 regular neighborhood of $q_f(F_0(f))$ as in Proposition \ref{prop:1} (\ref{prop:1.3}). $N(q_f(F_0(f)))$ is regarded as the total space of a trivial PL
 bundle over $q_f(F_0(f))$ with the fiber $[0,1]$. We may assume that $q_f(F_0(f))$ corresponds to the $0$-section ($0 \in [0,1]$). 

\ \ \ For each $p \in q_f(F_0(f))$, set $K_p:={q_f}^{-1}(\{p\} \times [0,1])$ for a fiber $\{p\} \times [0,1]$ of the bundle $N(q_f(F_0(f)))$ over $q_f(F_0(f))$. It
 is diffeomorphic to $D^{m-n+1}$. $q_f^{-1}(N(q_f(F_0(f))))$ is regarded as the total space of a linear bundle over $q_f(F_0(f))$ whose fiber is $D^{m-n+1}$ and $K_p$ is the fiber over $p \in q_f(F_0(f))$.
 

\ \ \ We obtain the following objects. See also the proof of Lemma 1 of \cite{kitazawa}.

\begin{enumerate}
\item A linear $D^{m-n+2}$-bundle $V_0$
 over $q_f(F_0(f))$ (we denote by $V_p$ the fiber over $p \in q_f(F_0(f))$) such that $q_f^{-1}(N(q_f(F_0(f))))$ is regarded as the total
 space of a subbundle of the bundle $V_0$ with the fiber $D^{m-n+1}$ and that $q_f^{-1}(N(q_f(F_0(f))))$ is in the
 boundary of $V_0$.
\item A simplicial map $r_0:V_0 \rightarrow N(q_f(F_0(f)))$.
such that ${r_0}^{-1}(p,t)$ is ${\rm PL}$ homeomorphic to $D^{m-n+1}$ for $p \in q_f(F_0(f))$ and $t \in (0,1]$ and
 that ${r_0}^{-1}(p,t)$ is a point for $p \in q_f(F_0(f))$ and $t=0$. 
\item A subbundle $\widetilde{N(q_f(F_0(f)))} \subset V_0$ of dimension $n$ such that the followings hold.
\begin{enumerate}
\item $\widetilde{N(q_f(F_0(f)))}$ is of dimension $n$.
\item ${r_0} {\mid}_{\widetilde{N(q_f(F_0(f)))}}:\widetilde{N(q_f(F_0(f)))} \rightarrow N(q_f(F_0(f)))$ is
 a ${\rm PL}$ homeomorphism (a bundle isomorphism between the two PL bundles).
\item $\widetilde{N(q_f(F_0(f)))} \bigcap \partial V_p$ consists of two points $(p,0),(p,1) \in \{p\} \times [0,1]$. One
 of the two points is in $q_f^{-1}(N(q_f(F_0(f))))$ and the other point is not. 
\item $V_0$ collapses to $\widetilde{N(q_f(F_0(f)))}$.
\end{enumerate}
\end{enumerate}  

\ \ \ Let $B(f)$ be the set of all the branched points of $W_f$. By considering the attachments of handles, $B(f) \subset q_f(F_1(f))$ follows. \\
 \\
Step 2 \ \ \ A regular neighborhood of $B(f)$.

  Let $N(B(f))$ be a small
 regular neighborhood of $B(f)$. From Proposition \ref{prop:1} (\ref{prop:1.5}), $N(B(f))$ is regarded as the total space
 of a PL bundle over $B(f)$ whose fiber is $K:= \{r \exp (i \theta) \in \mathbb{C} \mid 0 \leq r \leq 1, \theta = 0, \frac{2}{3}\pi, \frac{4}{3}\pi \}$. We may
 assume that $B(f)$ corresponds to the $0$-section ($0 \in K$).

  For each $p \in B(f)$, set $K_p:={q_f}^{-1}(\{p\} \times K)$ for a fiber $\{p\} \times K$ of the bundle $N(B(f))$ over $B(f)$. It is PL homeomorphic to
 $S^{m-n+1}$ with the interior of a union of disjoint three {\rm (}$m-n+1${\rm )}-dimensional closed standard discs removed. We
 may assume that $q_f^{-1}(N(B(f)))$ is regarded as the total space of a smooth bundle over $Q(f)$ with a fiber PL homeomorphic to $S^{m-n+1}$ with the interior
 of a union of disjoint three standard closed {\rm (}$m-n+1${\rm )}-discs removed and $K_p$ is
 the fiber over $p \in B(f)$. \\
%
\ \ \ We obtain the following objects. See also the proof of Lemma 1 of \cite{kitazawa}.

\begin{enumerate}
\item A PL $D^{m-n+2}$-bundle $V_B$ over $B(f)$ (we
 denote by $V_p$ the fiber over $p \in B(f)$) such
 that the bundle $q_f^{-1}(N(B(f)))$ is a subbundle of the bundle $V_B$ and that ${q_f}^{-1}(N(B(f)))$ is in the boundary of $V_B$.
\item A simplicial map
 $r_B:V_B \rightarrow N(B(f))$
 such that ${r_B}^{-1}(p,t)$ is PL homeomorphic to $D^{m-n+1}$ for $p \in B(f)$ and $t \in K-\{0\}$ and
 that ${r_B}^{-1}(p,t)$ collapses to a point for $p \in B(f)$ and $t=0$.
\item A subpolyhedron $\widetilde{N(B(f))} \subset V_B$ of dimension $n$ such that the following four hold.
\begin{enumerate}
\item $\widetilde{N(B(f))}$ is regarded as the total space of a subbundle of the bundle $V_1$.
\item ${r_B} {\mid}_{\widetilde{N(B(f))}}:\widetilde{N(B(f))} \rightarrow N(B(f))$ is
 a PL homeomorphism (a bundle isomorphism between the two PL bundles).
\item $\widetilde{N(B(f))} \bigcap \partial V_p$ consists of three points $(p,\exp(\frac{2}{3} k \pi i)) \in \{p\} \times K$ ($k=0,1,2$) and is not in ${q_f}^{-1}(N(B(f)))$. Furthermore, each connected
 component of $\partial V_p-q_f^{-1}(N(B(f)))$ includes one of these points and $V_B$ collapses to $\widetilde{N(B(f))}$. 
\item $V_B$ collapses to $\widetilde{N(B(f))}$.
\end{enumerate}
\end{enumerate}

For $2 \leq k \leq m-n$, let $G_k(f) \subset q_f(S(f))$ be the disjoint union of all the $k-1$ S-loci. By the assumption, we can
 take a regular neighborhood $N(G_k(f))$ so that ${q_f}^{-1}(N(G_k(f)))$ is regarded as the total space of a smooth bundle over $G_k(f)$ whose fiber
 is PL homeomorphic to an ($m-n+1$)-dimensional manifold simple homotopy equivalent to $S^{m-n-k+1}$ with the interior of a smoothly embedded
 ($m-n+1$)-dimensional standard closed disc removed.
 As mentioned in Proposition \ref{prop:1} (\ref{prop:1.4}), $N(G_k(f))$
 is regarded as the total space of a trivial PL $[-1,1]$-bundle over $G_k(f)$ and $G_k(f)$ is the image of the section corresponding to the point $0 \in [-1,1]$. \\
 \\ 
Step 3 \ \ \ Around the set $W_f-{\rm Int} ((N(q_f(F_0(f))) \sqcup N(B(f))) \sqcup {\sqcup}_{k} N(G_k(f)))$.

\ \ \ This set is a compact manifold of dimension $n$ and let $\{R_{\lambda}\}_{\lambda \in \Lambda}$ be
 the family of all the connected components of the set. $R_{\lambda}$ is in an AS-region or an S-region.

\ \ \ Let $R_{\lambda}$ be in an AS-region. Then, ${q_f}^{-1}(R_{\lambda})$ is regarded as the total space of a smooth bundle
 over $R_{\lambda}$ whose fiber is ${\rm PL}$ homeomorphic to $S^{m-n}$. We can define a PL $D^{m-n+1}$-bundle $V_{R_{\lambda}}$ over $R_{\lambda}$ which is an associated bundle of
 the bundle ${q_f}^{-1}(R_{\lambda})$ over $R_{\lambda}$ (we regard $S^{m-n}=\partial D^{m-n+1}$) by a
 PL map $r_{R_{\lambda}}:V_{R_{\lambda}} \rightarrow {R_{\lambda}}$. In addition, we take the associated bundle so that the structure group
 is a group consisting of
 PL homeomorphisms $r$ on $D^{m-n+1}$ such that $r(0)=0$ and that for a PL homeomorphism ${r}^{\prime}$ on $S^{m-n}$, $\frac{r(x)}{|x|}={r}^{\prime}(\frac{x}{|x|})$ ($x \neq 0$). Let $\widetilde{P(R_{\lambda})} \subset V_{R_{\lambda}}$ be the
 section of the associated bundle corresponding to the point $0 \in D^{m-n+1}$. Finally we set $P(R_{\lambda}):=R_{\lambda}$ and
 let $s_{R_{\lambda}}:P(R_{\lambda}) \rightarrow R_{\lambda}$ be the identity map.

\ \ \ Let $R_{\lambda}$ be in a $k$ S-region whose closure is bounded
 by a disjoint union of $k$ S-loci. Then, ${q_f}^{-1}(R_{\lambda})$ is regarded as the total space of a smooth bundle
 over $R_{\lambda}$ whose fiber is ${\rm PL}$ homeomorphic to $S^k \times S^{m-n-k}$. We can take a PL ($S^k \times D^{m-n-k+1}$)-bundle $V_{R_{\lambda}}$ over ${R_{\lambda}}$ which is
 an associated bundle of the bundle ${q_f}^{-1}(R_{\lambda})$ over $R_{\lambda}$ (we regard $S^k \times S^{m-n-k}=S^k \times \partial D^{m-n-k+1}$). In addition ,we take the associated bundle so that the structure group
 is a group consisting of
 some PL homeomorphisms $r$ on $S^k \times D^{m-n-k+1}$ such that $r$ is a bundle isomorphism on
 a PL bundle $S^k \times D^{m-n-k+1}$ over $S^k$ whose structure group consists of PL homeomorphisms ${r_1}$ on $D^{m-n-k+1}$, where $r_1(0)=0$ and for a
 PL homeomorphism ${r_2}$ on $S^{m-n-k}$, $\frac{r_1(x)}{|x|}={r_2}(\frac{x}{|x|})$ ($x \neq 0$) and that $r$ induces a PL homeomorphism
 on the base space $S^k$. Let $\widetilde{P(R_{\lambda})} \subset V_{R_{\lambda}}$ be the
 subbundle whose fiber is $S^k \times \{0\} \subset S^k \times D^{m-n-k+1}$. The bundle $V_{R_{\lambda}}$ over ${R_{\lambda}}$ is also given by the
 composition of two PL maps $r_{R_{\lambda}}:V_{R_{\lambda}} \rightarrow P(R_{\lambda})$ and $s_{R_{\lambda}}:P(R_{\lambda}) \rightarrow R_{\lambda}$, where
 $P(R_{\lambda})$ is a PL manifold and by $s_{R_{\lambda}}$ makes $P(R_{\lambda})$ a bundle over $R_{\lambda}$ equivalent to
 the bundle $\widetilde{P(R_{\lambda})}$ over $R_{\lambda}$.

\ \ \ Now we set disjoint unions $V_R:={\sqcup}_{\lambda \in \Lambda} V_{R_{\lambda}}$, $P_R:={\sqcup}_{\lambda \in \Lambda} P(R_{\lambda})$, $r_R:={\sqcup}_{\lambda \in \Lambda} r_{R_{\lambda}}$, $s_R:={\sqcup}_{\lambda \in \Lambda} s_{R_{\lambda}}$ and $\widetilde{P_R}:={\sqcup}_{\lambda \in \Lambda} \widetilde{P(R_{\lambda})}$. \\
 \\
Step 4 \ \ \ Around a regular neighborhood of $G_k(f)$ ($2 \leq k \leq m-n$)

\ \ \ After Steps 1, 2 and 3, for any connected component $C_k$ of $G_k(f)$ and a small regular neighborhood $N(C_k)$ of $C_k$ as in Proposition \ref{prop:1} (\ref{prop:1.4}), we obtain a PL bundle over $C_k$ whose fiber
 is an ($m-n+1$)-dimensional PL manifold $B_{C_k}$ simple homotopy equivalent to $S^{m-n-k+1}$ whose
 boundary $\partial B_{C_k}$ is $S^{k-1} \times S^{m-n-k+1}$ with the product manifold $D^{m-n-k+2} \times S^{k-1}$ attached by the product of a pair
 of PL homeomorphisms
 $({\phi}_{{C_1}_k}:\partial D^{m-n-k+2} \rightarrow S^{m-n-k+1}, {\phi}_{{C_2}_k}:S^{k-1} \rightarrow S^{k-1})$ and that the
 bundle ${q_f}^{-1}(N(C_k))$ is a subbundle of the
 $B_{C_k}$-bundle.

\ \ \ We obtain the following objects.

\begin{enumerate}
\item A PL $D^{m-n+2}$-bundle $V_{C_k}$ over $C_K$ (let $V_p$ denote the
 fiber over $p$ of the base space) such that the subbundle $\partial V_{C_k}$ corresponding to
 $\partial D^{m-n+2}=B_{C_k} {\bigcup}_{{\phi}_{{C_1}_k} \times {\phi}_{{C_2}_k}} (D^{m-n-k+2} \times S^{k-1})$ is the PL $S^{m-n+1}$-bundle
 in the previous paragraph. The bundle $q_f^{-1}(N(C_k))$
 over $C_k$ is
 a subbundle of the bundle $V_{C_k}$ and is in the boundary of $V_{C_k}$ (we regard it is also a subbundle of the $B_{C_k}$-bundle).
\item A PL bundle $P(C_k)$ whose fiber is the mapping cylinder of the constant
 map $c_k:S^{k-1} \rightarrow [0,1]$ satisfying $c_k(S^{k-1})=\{0\}$ and whose base space
 is $C_k$.
\item Simplicial maps $r_{C_k}:V_{C_k} \rightarrow P(C_k)$ and $s_{C_k}:P(C_k) \rightarrow N(C_k)$
 such that ${s_{C_k}}^{-1}(p)$ is a point for $p$ in the closure of an AS-region, that ${s_{C_k}}^{-1}(p)$ is PL homeomorphic to $S^{k-1}$
 for $p$ in a $k-1$ S-region, that ${r_{C_k}}^{-1}(q)$ is PL homeomorphic to $D^{m-n+1}$ for $q \in {s_{C_k}}^{-1}(p)$ for $p$ in an AS-region,
 that ${r_{C_k}}^{-1}(q)$ is PL homeomorphic to $D^{m-n-k+2}$ for $q \in {s_{C_k}}^{-1}(p)$ for $p$ in a $k-1$ S-region and that ${(s_{C_k} \circ {r_{C_k}})}^{-1}(\{p\} \times [-1,1])=V_p$ for all $p \in C_k$ ($\{p\} \times [-1,1]$ is a
 fiber of the bundle $N(C_k)$ over $C_k$; $N(C_k)$
 is regarded as the total space of a trivial PL $[-1,1]$-bundle over $C_k$ and $C_k$ is the image of the
 section corresponding to the point $0 \in [-1,1]$. Furthermore, ${r_{C_k}}^{-1}(q)$
 collapses to a point for $q \in P(C_k)$. 
\item A subpolyhedron $\widetilde{P(C_k)} \subset V_{C_k}$ of dimension $n+k-1$ such
 that the following four hold.
\begin{enumerate} 
\item $\widetilde{P(C_k)}$ is regarded as the total space of a subbundle of the bundle $V_{C_k}$.
\item ${r_{C_k}} {\mid}_{\widetilde{P(C_k)}}:\widetilde{P(C_k)} \rightarrow P(C_k)$ is
 a PL homeomorphism and a bundle isomorphism between the two PL bundles.
\item $\widetilde{P(C_k)} \bigcap \partial V_p$ does not contain any points in
 ${q_f}^{-1}(N(C_k))$ and is the disjoint union of a point in $B_{C_k}$ and
 a ($k-1$)-dimensional sphere $\{0\} \times S^{k-1} \subset D^{m-n-k+2} \times S^{k-1}$, where $B_{C_k}$
 and $D^{m-n-k+2} \times S^{k-1}$ are the naturally embedded submanifolds of the fiber $\partial V_p$ over $p$ of the bundle $\partial V_{C_k} \subset V_{C_k}$, which is PL homeomorphic
 to $B_{C_k} {\bigcup}_{{\phi}_{{C_1}_k} \times {\phi}_{{C_2}_k}} (D^{m-n-k+2} \times S^{k-1})$.
\item $V_{C_k}$ collapses to $\widetilde{P(C_k)}$.
\end{enumerate} 
\end{enumerate}

$V_G:={\sqcup}_{C_k} V_{C_k}$, $P_G:={\sqcup}_{C_k} P(C_k)$, $r_G:={\sqcup}_{C_k} r_{C_k}$, $s_G:={\sqcup}_{C_k} s_{C_k}$ and $\widetilde{P_G}:={\sqcup}_{C_k} \widetilde{P(C_k)}$.    

Thus, by gluing the manifolds $V_0$, $V_B$, $V_R$ and $V_G$ together, we obtain the desired manifold $V$. We can obtain other desired objects. This completes the proof.

\end{proof}

We have the following corollary.

\begin{Cor}
\label{cor:3}
In the situation of Theorem \ref{thm:3}, let $M$ be connected and $i:M \rightarrow W$ be
 the natural inclusion. Then \\
$$r_{\ast} \circ i_{\ast}:{\pi}_k(M) \rightarrow {\pi}_k(V)$$
 gives an isomorphism for $0 \leq k \leq m-\dim{V}-1$.
\end{Cor}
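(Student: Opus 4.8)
The plan is to reduce the statement to a connectivity estimate for the inclusion $i\colon M\hookrightarrow W$ and then to extract that estimate from the collapse $W\searrow V'$ provided by Theorem \ref{thm:3}. First I would observe that, by the conditions listed in Theorem \ref{thm:3}, the map $r\colon W\to V$ is a homotopy equivalence: each fibre $r^{-1}(p)$ collapses to a point and $r$ is explicitly asserted to be a homotopy equivalence. Consequently $r_{\ast}\colon\pi_k(W)\to\pi_k(V)$ is an isomorphism for every $k$, and the composite $r_{\ast}\circ i_{\ast}$ is an isomorphism in degree $k$ if and only if $i_{\ast}\colon\pi_k(M)\to\pi_k(W)$ is. Thus the whole problem reduces to showing that $i_{\ast}$ is an isomorphism for $0\le k\le m-\dim V-1$.

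Next I would exploit that $W$ collapses to the subpolyhedron $V'$ with $\dim V'=\dim V=:d$ (the PL homeomorphism $r\mid_{V'}\colon V'\to V$ preserves dimension), while $W$ is a compact PL manifold of dimension $q:=m+1$ with $\partial W=M$. Since $W\searrow V'$, the manifold $W$ is a regular neighbourhood of its spine $V'$; a regular neighbourhood of a $d$-dimensional polyhedron carries a handle decomposition (built from the empty set) whose handles all have index $\le d$. Turning this decomposition upside down, $W$ is obtained from a collar $M\times[0,1]$ of $\partial W=M$ by successively attaching handles of index $\ge q-d=(m+1)-\dim V$. Up to homotopy this means $(W,M)$ is built from $M$ by attaching relative cells only in dimensions $\ge(m+1)-\dim V$, so the pair $(W,M)$ is $\bigl((m+1)-\dim V-1\bigr)$-connected; that is, $\pi_i(W,M)=0$ for all $i\le m-\dim V$.

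Finally I would feed this into the long exact homotopy sequence
$$\cdots \to \pi_{i+1}(W,M) \to \pi_i(M) \xrightarrow{\ i_{\ast}\ } \pi_i(W) \to \pi_i(W,M) \to \cdots$$
of the pair $(W,M)$. The vanishing of $\pi_{i+1}(W,M)$ and $\pi_i(W,M)$ for $i\le m-\dim V-1$ forces $i_{\ast}\colon\pi_i(M)\to\pi_i(W)$ to be an isomorphism in that range (and an epimorphism in degree $m-\dim V$), the connectedness of $M$ covering the low-degree $\pi_0$ and $\pi_1$ cases as well. Combining with the isomorphism $r_{\ast}$ from the first step yields that $r_{\ast}\circ i_{\ast}\colon\pi_k(M)\to\pi_k(V)$ is an isomorphism for $0\le k\le m-\dim V-1$. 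The identical argument, using Theorem \ref{thm:1}(4) in place of Theorem \ref{thm:3}(2), simultaneously establishes Corollary \ref{cor:2}.

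The step I expect to be the genuine obstacle is the passage from the collapse $W\searrow V'$ to the dual handle decomposition with the sharp index bound $\ge(m+1)-\dim V$. This requires the PL regular-neighbourhood and handle-duality machinery, realizing $W$ as a regular neighbourhood of its spine and dualizing a low-index handle presentation into a high-index one relative to $\partial W$, together with a mild general-position argument to push $V'$ into the interior of $W$ so that the dual decomposition really is built on the whole boundary $M$. Once the index bound is secured, the connectivity estimate and the exact-sequence bookkeeping are routine; the only remaining care concerns the non-simply-connected cases, where it is the relative homotopy vanishing $\pi_i(W,M)=0$ (rather than a relative Hurewicz argument) that makes the conclusion uniform across all $k$ in the stated range.
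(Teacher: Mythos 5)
Your proposal is correct and takes essentially the same route as the paper's own proof: reduce via the homotopy equivalence $r$ to showing $i_{\ast}:\pi_k(M)\rightarrow \pi_k(W)$ is an isomorphism, use the collapse $W\searrow V^{\prime}$ to present $W$ by handles of index at most $\dim V$, dualize to obtain $W$ from a collar $M\times [0,1]$ by attaching handles of index at least $m+1-\dim V$, and conclude in the stated range. The only difference is that you make explicit the relative-homotopy vanishing $\pi_i(W,M)=0$ and the long-exact-sequence bookkeeping, which the paper leaves implicit in its final "Hence".
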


\begin{proof}[The proof of Corollaries \ref{cor:2} and \ref{cor:3}]
 Since $r$ is a homotopy equivalence, we have only
 to show that $i_{\ast}:{\pi}_{k}(M) \rightarrow {\pi}_{k}(W)$ ($0 \leq k \leq m-\dim{V}-1$)
is an isomorphism. Since $W$ collapses to a polyhedron of dimension $\dim{V}$, it is regarded as a polyhedron consisting
 of handles whose indices are not larger than $\dim{V}$. By dualizing the handles, $V$ is
 regarded as a PL manifold obtained by attaching handles whose indices are not smaller than $m-\dim{V}+1$ along
the component $M \times \{1\}$ of $M \times [0,1]$. Hence, the homomorphism $i_{\ast}:{\pi}_{k}(M) \rightarrow {\pi}_{k}(W)$ ($0 \leq k \leq m-\dim{V}-1$) is an isomorphism and this completes the proof.
\end{proof}

\end{document}